
\documentclass[12pt]{amsart}

\usepackage{
amsfonts,
latexsym,
amssymb,
}


\newcommand{\labbel}{\label}



 
\newtheorem{theorem}{Theorem}

\newtheorem{proposition}[theorem]{Proposition} 
 
\newtheorem{corollary}[theorem]{Corollary}

\theoremstyle{definition}

\newtheorem{problem}[theorem]{Problem}

\theoremstyle{remark}
\newtheorem{remark}[theorem]{Remark}



\newcommand{\brfrt}{\hspace{0 pt}}

\DeclareMathOperator{\cf}{cf}

\newcommand{\m}{\mathfrak}


\begin{document}
 
\title[$H$-closed, $D$-pseudocompact]
{For Hausdorff spaces,\\ 
$H$-closed =  $D$-pseudocompact for all ultrafilters $D$}

\author{Paolo Lipparini} 
\address{Dipartimento di Matematica\\Viale dei Decreti Attuattivi Scientifici\\II Universit\`a di Roma (Tor Vergata)\\I-00133 ROME ITALY}
\urladdr{http://www.mat.uniroma2.it/\textasciitilde lipparin}

\thanks{We wish to express our gratitude to X. Caicedo, S.  Garc{\'{\i}}a-Ferreira
 and  {\'A} Tamariz-Mascar{\'u}a
 for stimulating discussions and correspondence.}  

\keywords{Weak initial compactness, $D$-pseudocompactness, $H$-closed, $H(i)$} 

\subjclass[2010]{Primary 54D20; Secondary 54B10, 54A20}

\begin{abstract}
We prove that, for an arbitrary topological space $X$,
the following two conditions are equivalent:
(a) Every open cover of $X$ has a 
finite subset with dense union
(b) $X$ 
 is $D$-pseudocompact, for every ultrafilter $D$.

Locally, our result asserts that if $X$ is weakly initially $\lambda$-compact, 
and $2^ \mu \leq \lambda $, then $X$ is $D$-\brfrt pseudocompact, for every 
ultrafilter $D$ over any set of cardinality $ \leq \mu$.
As a consequence, if $2^ \mu \leq \lambda $, then the product of any family of
 weakly initially $\lambda$-compact spaces is 
weakly initially $\mu$-compact.
\end{abstract}

\maketitle

Throughout this note $\lambda$ and $\mu$ are infinite cardinals. 
No separation axiom is assumed, if not otherwise specified.
By a product of topological spaces we shall always mean the Tychonoff product.

The notion of weak initial $\lambda$-compactness has been 
introduced by  Z. Frol{\'{\i}}k \cite{F} under a different name
and subsequently studied by various authors. See, e.~g., 
 Stephenson and Vaughan \cite{SV}. See \cite[Remark 3]{tapp2} for further references about this and related notions.

For Tychonoff spaces, and for $D$ an ultrafilter over $ \omega$, the notion of 
$D$-pseudocompactness has been introduced by 
Ginsburg and Saks \cite{GS}. Their paper contains also 
significant applications. The notion
has been extensively studied by many authors in the setting of Tychonoff 
spaces, especially in connection with various orders on
$ \omega^*$. See, e.~g.,   \cite{GF1,HST,ST}  and further references there
for results and  related notions.
In the case of an  ultrafilter over an arbitrary cardinal, the notion of $D$-pseudocompactness has been introduced and studied in 
Garc{\'{\i}}a-Ferreira \cite{GF}.

In this note we show that  weak initial $\lambda$-compactness
and $D$-\brfrt pseudocompactness
 are tightly connected.
In fact, $D$-\brfrt pseudocompactness for every ultrafilter $D$ is
equivalent to  weak initial $\lambda$-compactness for every 
infinite cardinal $\lambda$. No separation axiom is needed to prove the equivalence.
As mentioned in the abstract, our result has a local version (Theorem \ref{dps} below).

The situation described in this note
has some resemblance with the connections
between initial $\lambda$-\brfrt compactness and $D$-compactness.
See, e.~g., the survey by
R. Stephenson  \cite{St} for definitions and results,
in particular, Section 3 therein. 
However, Remark \ref{conc} here points out a significant difference.

\smallskip

We now recall the relevant definitions.
A topological space is said to be \emph{weakly initially $\lambda$-compact} if and only if every open cover of cardinality at most $\lambda$ has a 
finite subset with dense union. 
Notice that, for Tychonoff  spaces, weak initial $ \omega $-compactness 
is well known to be equivalent to pseudocompactness. 

If $D$ is an ultrafilter 
over some set $I$, a topological space $X$ is said to be 
 \emph{$D$-pseudocompact}  if and only if
every $I$-indexed sequence of nonempty open sets of $X$ has some
$D$-limit point, where $x$ is called a \emph{$D$-limit point} of 
the sequence  $(O_i) _{i \in I} $ if and only if,
for every neighborhood $U$ of $x$ in $X$,
$\{ i \in I \mid U \cap O_i \not= \emptyset \} \in D$.

\begin{theorem} \labbel{dps}
If $X$ is a  weakly initially $\lambda$-compact topological space, 
and $2^ \mu \leq \lambda $, then $X$ is $D$-pseudocompact, for every 
ultrafilter $D$ over any set of cardinality $ \leq \mu$.
 \end{theorem}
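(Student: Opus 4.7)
The plan is to argue by contradiction. Let $D$ be an ultrafilter over a set $I$ with $|I| \leq \mu$, and suppose $(O_i)_{i \in I}$ is a sequence of nonempty open subsets of $X$ with no $D$-limit point. Because $D$ is an ultrafilter, for each $x \in X$ one can choose an open neighborhood $U_x$ of $x$ such that $A_x := \{i \in I : U_x \cap O_i = \emptyset\}$ lies in $D$. The naive cover $\{U_x : x \in X\}$ is typically too large to be controlled by weak initial $\lambda$-compactness, so the key move is to reparametrize: instead of indexing by points of $X$, index by the subsets of $I$ that belong to $D$.

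For each $A \in D$, let $V_A$ be the interior of $X \setminus \bigcup_{i \in A} O_i$, i.e., the largest open subset of $X$ meeting none of the $O_i$ with $i \in A$. Each $U_x$ is contained in $V_{A_x}$, so $\{V_A : A \in D\}$ is an open cover of $X$; moreover its cardinality is at most $|D| \leq 2^{|I|} \leq 2^\mu \leq \lambda$. Weak initial $\lambda$-compactness of $X$ then produces finitely many $A_1,\dots,A_n \in D$ whose associated $V_{A_k}$ have dense union.

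Because $D$ is a filter, $A_1 \cap \cdots \cap A_n \in D$, hence is nonempty; pick $i_0$ in this intersection. By construction $V_{A_k} \cap O_{i_0} = \emptyset$ for each $k$, so the nonempty open set $O_{i_0}$ is disjoint from $V_{A_1} \cup \cdots \cup V_{A_n}$, contradicting density and completing the proof.

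I expect the only genuine difficulty to be spotting this reparametrization, which trades the unbounded open cover indexed by $X$ for one indexed by the ultrafilter $D$, whose size is controlled by the cardinal arithmetic hypothesis $2^\mu \leq \lambda$. Once the cover $\{V_A : A \in D\}$ is in hand, the remaining steps are essentially formal; in particular, no separation axiom is invoked, matching the generality of the statement.
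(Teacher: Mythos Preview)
Your proof is correct and follows essentially the same approach as the paper: argue by contradiction, choose for each $x$ a neighborhood $U_x$ with $A_x \in D$, then reparametrize the cover by elements of $D$ to obtain an open cover of size at most $2^{|I|} \leq \lambda$, and use weak initial $\lambda$-compactness together with the finite intersection property of $D$ to produce an $O_{i_0}$ missing a dense set. The only cosmetic difference is that the paper defines $V_Z = \bigcup\{U_x : A_x = Z\}$ rather than your (slightly larger, choice-free) $V_A = \operatorname{int}\bigl(X \setminus \bigcup_{i \in A} O_i\bigr)$; either version works for the same reason.
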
 

\begin{proof} 
Suppose by contradiction that 
$X$ is  weakly initially $\lambda$-compact, $D$ is an
ultrafilter over $I$, $2 ^{|I|} \leq \lambda  $,
and $X$ is not $D$-pseudocompact.
Thus, there is a sequence
$(O_i) _{i \in I} $ of nonempty open sets of $X$ 
which has no $D$-limit point in $X$.
This means that, for every $x \in X$, there is an open  neighborhood 
$U_x$ of $x$ such that 
$\{ i \in I \mid U_x \cap O_i \not= \emptyset \} \not\in D$, that is, 
$\{ i \in I \mid U_x \cap O_i = \emptyset \} \in D$,
since $D$ is an ultrafilter. 
For each $x \in X$, choose some $U_x$ as above, and let
$Z_x= \{ i \in I \mid U_x \cap O_i = \emptyset \}$. Thus, $Z_x \in D$. 

For each $Z \in D$, let $V_Z = \bigcup \{U_x  \mid x \text{ is such that } Z_x=Z \} $. 
Notice that if $i \in Z \in D$, then 
$V_Z \cap O_i = \emptyset $. 
Notice also that  $(V_Z) _{Z \in D} $ is an open cover of $X$. 
Since $| D| \leq 2 ^{|I|} \leq \lambda $, then,
by   weak initial $\lambda$-compactness, there is a finite number
$Z_1$, \dots,  $Z_n$ of elements of $D$ such that   
$V _{Z_1} \cup  \dots \cup  V_ {Z_n}$ is dense in $X$.
Since $D$ is a filter, $Z= Z_1 \cap \dots \cap Z_n \in D$,
hence $ Z_1 \cap \dots \cap Z_n \not= \emptyset $.
Choose $i \in Z_1 \cap \dots \cap Z_n$. 
Then $O_i \cap V _{Z_1} = \emptyset $, \dots,  
$O_i \cap V _{Z_n} = \emptyset $, hence
$O_i \cap (V _{Z_1} \cup  \dots \cup  V_ {Z_n}) = \emptyset $, 
contradicting the conclusion that
$V _{Z_1} \cup  \dots \cup  V_ {Z_n}$
is dense in $X$, since, by assumption,
$O_i$ is nonempty. 
\end{proof}  

Theorem \ref{dps} shows that
 weak initial $\lambda$-compactness implies $D$-\brfrt pseudocompactness,
for ultrafilters over  sets of sufficiently small cardinality. 
The next proposition presents an easy 
result in the other direction.

Recall that an ultrafilter over $\mu$ is \emph{regular} if and only if
there is a family of $\mu$ elements of $D$ such that the intersection
of any infinite subset of the family is empty. 
As a consequence of the Axiom of Choice (actually, the Prime Ideal Theorem suffices), for every infinite cardinal $\mu$ there is a regular ultrafilter over $\mu$.

\begin{proposition} \labbel{15}  
If the topological space $X$ is $D$-pseudocompact, for some regular ultrafilter $D$ 
over $\mu$, then $X$ is weakly initially $\mu$-compact.
Actually, every power of $X$ is weakly initially $\mu$-compact.
 \end{proposition}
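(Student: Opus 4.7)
\emph{Proof plan.} The plan is to use the regularizing family for $D$ to convert a hypothetical open cover of $X$ of cardinality $\leq \mu$ having no finite subfamily with dense union into a $\mu$-indexed sequence of nonempty open sets with no $D$-limit point, thereby contradicting $D$-pseudocompactness.

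First I fix a regularizing family $\{A_\alpha : \alpha < \mu\} \subseteq D$, so that for every $i \in \mu$ the trace $F_i = \{\alpha < \mu : i \in A_\alpha\}$ is finite. Suppose, toward a contradiction, that $\{U_\alpha : \alpha < \mu\}$ is an open cover of $X$ no finite subfamily of which has dense union. For each $i \in \mu$ I set $O_i = X \setminus \overline{\bigcup_{\alpha \in F_i} U_\alpha}$; by the contradiction hypothesis applied to the finite set $F_i \subseteq \mu$, each $O_i$ is a nonempty open set. Applying $D$-pseudocompactness to $(O_i)_{i \in \mu}$ gives a $D$-limit point $x \in X$, and I pick $\beta < \mu$ with $x \in U_\beta$. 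Then $\{i \in \mu : U_\beta \cap O_i \neq \emptyset\} \in D$. On the other hand, whenever $i \in A_\beta$ one has $\beta \in F_i$, hence $U_\beta \subseteq \overline{\bigcup_{\alpha \in F_i} U_\alpha}$ and so $U_\beta \cap O_i = \emptyset$; this means the above set is disjoint from $A_\beta \in D$, the desired contradiction. The only place regularity is used is the symmetric equivalence $i \in A_\beta \iff \beta \in F_i$, which is exactly what forces each $F_i$ to be finite.

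For the assertion about arbitrary powers, rather than redoing the argument coordinate by coordinate, I would invoke the known productivity of $D$-pseudocompactness established in Garc{\'{\i}}a-Ferreira \cite{GF}: an arbitrary Tychonoff product of $D$-pseudocompact spaces is again $D$-pseudocompact. Hence every power of $X$ is $D$-pseudocompact, and the first part of the proposition then yields weak initial $\mu$-compactness of every power. This appeal to productivity is the most substantial ingredient of the whole proposition; the direct contradiction argument above is short and combinatorial, and I do not anticipate any serious obstacle there beyond carefully tracking the bookkeeping between the sets $A_\alpha$, $F_i$, and $O_i$.
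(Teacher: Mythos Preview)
Your direct argument is correct. The paper itself gives no argument at all for this proposition: its entire proof is a citation to \cite[Corollary~15]{tapp2}. So there is nothing in the paper to compare your reasoning against; your self-contained proof is presumably close in spirit to what appears in that reference, and in any case it is the standard way to extract weak initial $\mu$-compactness from $D$-pseudocompactness via a regularizing family.

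Two minor remarks. First, your closing sentence slightly misplaces the role of regularity: the equivalence $i \in A_\beta \Leftrightarrow \beta \in F_i$ is immediate from the definition of $F_i$ and holds for any family $\{A_\alpha\}$. Regularity enters precisely in guaranteeing that each $F_i$ is finite, which is what you need so that the hypothesis ``no finite subfamily is dense'' makes each $O_i$ nonempty. Second, for productivity of $D$-pseudocompactness the paper cites Ginsburg--Saks \cite{GS} rather than \cite{GF}; either reference supports the step you need.
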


\begin{proof} 
E.~g., by \cite[Corollary 15]{tapp2}. 
\end{proof} 

\begin{corollary} \labbel{cor}
If $2^ \mu \leq \lambda $, then the product of any family of
 weakly initially $\lambda$-compact spaces is 
weakly initially $\mu$-compact.
 \end{corollary}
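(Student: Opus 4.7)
The plan is to chain together Theorem \ref{dps}, a productivity lemma for $D$-pseudocompactness, and Proposition \ref{15}. Let $\{X_k\}_{k \in K}$ be a family of weakly initially $\lambda$-compact spaces and set $X = \prod_{k \in K} X_k$. Pick a regular ultrafilter $D$ over $\mu$, as supplied by the remark preceding Proposition \ref{15}. Since $|D| \leq 2^{\mu} \leq \lambda$, Theorem \ref{dps} yields that each factor $X_k$ is $D$-pseudocompact. If one can further show that $X$ itself is $D$-pseudocompact, then Proposition \ref{15} delivers weak initial $\mu$-compactness of $X$ and the proof is complete.

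The real work is therefore the intermediate step: an arbitrary product of $D$-pseudocompact spaces is $D$-pseudocompact. My approach is to start with a sequence $(O_i)_{i \in \mu}$ of nonempty open sets in $X$, shrink each $O_i$ to a basic open subset $\prod_k O_i^{(k)}$ in which $O_i^{(k)} = X_k$ off a finite set $F_i \subseteq K$ (a $D$-limit of the shrunken sequence is automatically a $D$-limit of the original), and then use $D$-pseudocompactness coordinatewise: for each $k$ the sequence $(O_i^{(k)})_{i \in \mu}$ consists of nonempty open sets of $X_k$ and so admits a $D$-limit $x_k \in X_k$. Assemble $x = (x_k)_{k \in K}$.

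To verify that $x$ is a $D$-limit of the refined sequence, take a basic neighborhood $U = \prod_k U_k$ of $x$ with $U_k = X_k$ off a finite set $F \subseteq K$. Since $O_i$ is basic open, $U \cap O_i \neq \emptyset$ iff $U_k \cap O_i^{(k)} \neq \emptyset$ for every $k$; this is automatic outside $F$ (there $U_k \cap O_i^{(k)} = O_i^{(k)} \neq \emptyset$), so only the finitely many coordinates $k \in F$ impose a nontrivial condition. For each such $k$, the set $\{i : U_k \cap O_i^{(k)} \neq \emptyset\}$ lies in $D$ by the choice of $x_k$, and the filter property gives that their finite intersection is still in $D$. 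The only genuinely delicate point is the initial reduction to basic open sets, which is what lets coordinatewise $D$-limits be glued into a $D$-limit on the product; once that reduction is made, the finiteness of $F$ together with the filter axioms finish the job.
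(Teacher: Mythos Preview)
Your proof is correct and follows exactly the paper's route: fix a regular ultrafilter $D$ over $\mu$, apply Theorem \ref{dps} to each factor, use productivity of $D$-pseudocompactness, and finish with Proposition \ref{15}. The only difference is that the paper simply cites \cite{GS} for productivity whereas you spell out the (standard) coordinatewise argument; one cosmetic quibble is that your phrase ``since $|D|\le 2^\mu\le\lambda$'' slightly misstates the hypothesis of Theorem \ref{dps} (which asks that $D$ lie over a set of cardinality $\le\mu$, clearly satisfied here), but this is harmless.
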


 \begin{proof} 
Choose some regular ultrafilter $D$ over $\mu$. 
Given any family of  weakly initially $\lambda$-compact spaces, then,
by Theorem \ref{dps}, each member of the family is $D$-pseudocompact. 
Since $D$-pseudocompactness is productive \cite{GS}, the product is
$D$-\brfrt pseudocompact, hence  weakly initially $\mu$-compact, because of the choice
of $D$, and by Proposition \ref{15}.
\end{proof}  

Let us say that a topological space is \emph{weakly initially $<\nu$-compact} if and only if every open cover of cardinality  $<\nu$ has a 
finite subset with dense union. That is,  weak initial $<\nu$-compactness
means  weak initially $ \lambda $-compactness for all $ \lambda  < \nu $. 
Recall that a topological space is said to be \emph{initially $ \lambda $-compact}
if and only if  every open cover of cardinality at most $ \lambda $ has a 
finite subcover.

\begin{corollary} \labbel{inacc}
Suppose that $\nu$ is a strong limit cardinal.
 \begin{enumerate}   
\item  
Any product of a family of 
weakly initially $<\nu$-compact topological spaces 
is weakly initially $<\nu$-compact.
\item  
If $\nu$ is singular, then a product of a family of topological spaces 
is weakly initially $\nu$-compact,
provided that each factor is both  
weakly initially $\nu$-compact  
and initially $ 2 ^{ \cf \nu }$-compact.
 \end{enumerate}
 \end{corollary}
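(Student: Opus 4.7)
The plan is to derive part (1) directly from Corollary \ref{cor} and then to reduce part (2) to part (1) combined with the classical parallel of Corollary \ref{cor} for initial $\lambda$-compactness.

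For part (1), fix any infinite $\mu < \nu$. Since $\nu$ is a strong limit cardinal, $2^\mu < \nu$, so setting $\lambda = 2^\mu$ gives $\lambda < \nu$, whence each factor, being weakly initially $<\nu$-compact, is in particular weakly initially $\lambda$-compact. Corollary \ref{cor} now yields that the product is weakly initially $\mu$-compact. As $\mu$ was arbitrary, the product is weakly initially $<\nu$-compact.

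For part (2), set $\kappa = \cf \nu < \nu$ and consider an open cover $\mathcal{U}$ of the product of cardinality $\leq \nu$. By the singularity of $\nu$, write $\mathcal{U} = \bigcup_{\alpha < \kappa} \mathcal{U}_\alpha$ with each $|\mathcal{U}_\alpha| < \nu$, and set $W_\alpha = \bigcup \mathcal{U}_\alpha$. Then $\{W_\alpha : \alpha < \kappa\}$ is an open cover of the product of size $\leq \kappa \leq 2^{\cf \nu}$. The crucial ingredient is the classical productivity theorem for initial $\lambda$-compactness (a parallel of Corollary \ref{cor}, discussed in Section~3 of \cite{St}): since each factor is initially $2^{\cf \nu}$-compact, the product is initially $\kappa$-compact. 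Hence $\{W_\alpha\}$ admits a finite subcover $W_{\alpha_1}, \dots, W_{\alpha_n}$ of the whole product. Then $\mathcal{U}_{\alpha_1} \cup \dots \cup \mathcal{U}_{\alpha_n}$ covers the product and has cardinality $<\nu$. Since weak initial $\nu$-compactness of each factor obviously entails weak initial $<\nu$-compactness, part (1) applies and extracts from this subcover a finite subfamily with dense union, as required.

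The main obstacle is precisely this invocation of the classical productivity of initial $\lambda$-compactness, which explains the extra initial $2^{\cf \nu}$-compactness assumption in (2). A self-contained derivation would mimic the argument of Theorem \ref{dps} with $D$-compactness in place of $D$-pseudocompactness, using the productivity of $D$-compactness for a regular ultrafilter $D$ over $\cf \nu$; the rest of the argument then proceeds as above.
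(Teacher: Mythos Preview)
Your proof is correct and follows essentially the same route as the paper: part~(1) is derived from Corollary~\ref{cor} via the strong limit hypothesis, and part~(2) combines part~(1) with the classical productivity of initial $\lambda$-compactness to get initial $\cf\nu$-compactness of the product. Your decomposition argument in part~(2) simply spells out in detail the ``easy fact'' the paper cites, namely that a weakly initially $<\nu$-compact and initially $\cf\nu$-compact space is weakly initially $\nu$-compact.
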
 

\begin{proof}
(1) is immediate from Corollary \ref{cor}, and the assumption that $\nu$ is
a strong limit cardinal.

(2) Suppose that we have a product as in the assumption. By (1),
the product is weakly initially $<\nu$-compact.
By known results, or by a variation on the proof of Theorem \ref{dps} 
 (see Remark \ref{conc} or Theorem \ref{dpsf}),
any product of  initially $ 2 ^{ \cf \nu }$-compact
spaces is initially $ \cf \nu $-compact.
(2) now follows from the easy fact that a 
weakly initially $<\nu$-compact
and initially $ \cf \nu $-compact space is 
weakly initially $\nu$-compact
(actually,  a weakly initially $<\nu$-compact
and  $ [\cf \nu , \cf \nu] $-compact space is 
weakly initially $\nu$-compact.)
 \end{proof}  

We now give the characterization of Hausdorff-closed
spaces announced in the title. Recall that
a topological space $X$ is said to be $H(i)$ if and only if 
every open filter base on X has nonvoid adherence.
Equivalently, a topological space is $H(i)$ if and only if  every open cover has a 
finite subset with dense union.
A Hausdorff space is \emph{$H$-closed}  (or \emph{Hausdorff-closed}, or  \emph{absolutely closed}) if and only if  it is 
closed in every Hausdorff space in which it is embedded.
It is well known that a Hausdorff 
topological space is $H$-closed if and only if
it is $H(i)$.   
A regular Hausdorff space is $H$-closed if and only if it is compact.
See, e.~g.,  \cite{ScSt} for references.

\begin{theorem} \labbel{hi}
For every topological space $X$, the following conditions are equivalent.
  \begin{enumerate}   
 \item 
$X$ is $H(i)$.
\item
$X$ is 
weakly initially $\lambda$-compact, for every infinite cardinal $\lambda$.
\item
$X$ is $D$-pseudocompact, for every ultrafilter $D$.
\item
For every infinite cardinal $\lambda$, there exists some
regular ultrafilter $D$ over $\lambda$ such that $X$ is $D$-pseudocompact.
  \end{enumerate}

If $X$ is Hausdorff (respectively, Hausdorff and regular) then the preceding conditions are also equivalent to, respectively:
  \begin{enumerate}    
\item[(5)]
$X$ is $H$-closed. 
\item[(6)]
$X$ is compact. 
   \end{enumerate}
\end{theorem}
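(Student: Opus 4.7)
The plan is to prove the equivalence by the cycle $(1) \Rightarrow (2) \Rightarrow (3) \Rightarrow (4) \Rightarrow (2)$ and $(2) \Rightarrow (1)$, using the earlier results essentially as black boxes. The equivalence $(1) \Leftrightarrow (2)$ is definitional: $(1) \Rightarrow (2)$ is immediate since weak initial $\lambda$-compactness is just the $H(i)$ condition restricted to covers of size at most $\lambda$. For $(2) \Rightarrow (1)$, given an arbitrary open cover $\mathcal U$, set $\lambda = \max(|\mathcal U|, \omega)$; then weak initial $\lambda$-compactness applied to $\mathcal U$ delivers a finite subfamily with dense union.

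For $(2) \Rightarrow (3)$, I would invoke Theorem \ref{dps} directly: for each ultrafilter $D$ over a set $I$, set $\mu = |I|$ and $\lambda = 2^\mu$; by (2) the space $X$ is weakly initially $\lambda$-compact, so Theorem \ref{dps} gives $D$-pseudocompactness. The implication $(3) \Rightarrow (4)$ is trivial once one recalls that regular ultrafilters over $\lambda$ exist for every infinite $\lambda$ (noted in the text before Proposition \ref{15}). For $(4) \Rightarrow (2)$, fix an infinite cardinal $\lambda$ and pick, by (4), a regular ultrafilter $D$ over $\lambda$ with $X$ being $D$-pseudocompact; then Proposition \ref{15} yields weak initial $\lambda$-compactness of $X$. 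Since $\lambda$ is arbitrary, (2) holds.

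For the Hausdorff clauses: the equivalence $(1) \Leftrightarrow (5)$ under the Hausdorff assumption is the standard characterization of $H$-closed spaces as $H(i)$ spaces, which is recalled in the paragraph just before the theorem; nothing further is required. Under the stronger hypothesis that $X$ is Hausdorff and regular, the equivalence $(5) \Leftrightarrow (6)$ is the classical fact (also recalled in the same paragraph) that a regular Hausdorff $H$-closed space is compact, so it suffices to chain this with the previous equivalences.

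The only step that is not either definitional or a direct citation is the passage from $(2)$ to $(3)$, which is exactly what Theorem \ref{dps} was designed to supply; hence there is no genuine obstacle, and the proof reduces to assembling the cycle cleanly and pointing out that $(1) \Leftrightarrow (2)$ requires only bookkeeping on cardinalities of covers.
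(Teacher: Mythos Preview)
Your proof is correct and follows essentially the same route as the paper: the equivalence $(1)\Leftrightarrow(2)$ is dispatched by definition, $(2)\Rightarrow(3)$ invokes Theorem~\ref{dps}, $(3)\Rightarrow(4)$ uses the existence of regular ultrafilters, $(4)\Rightarrow(2)$ cites Proposition~\ref{15}, and the Hausdorff addenda appeal to the classical characterizations recalled before the statement. The only cosmetic difference is that you spell out the cardinal bookkeeping in $(1)\Leftrightarrow(2)$ and in the application of Theorem~\ref{dps} a bit more explicitly than the paper does.
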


 \begin{proof}
(1) and (2) are  equivalent, because of the above mentioned
characterization of $H(i)$ spaces.

(2) $\Rightarrow $  (3) is immediate from Theorem \ref{dps}.

(3) $\Rightarrow $  (4) follows from the fact that, as we mentioned
right before Proposition \ref{15}, for every infinite cardinal $\lambda$, there 
does exist some
regular ultrafilter  over $\lambda$.

(4) $\Rightarrow $  (2) follows from  Proposition \ref{15}.

The equivalences of (1) and (5), and of (1) and (6), under the respective
assumptions, follow from the remarks before the statement of the theorem.
 \end{proof}  

As a consequence of Theorem  \ref{hi}, we get another proof of some classical results.

\begin{corollary} \labbel{hiprod}
Any product of a family of $H(i)$ spaces is an $H(i)$ space. 

Any product of a family of $H$-closed Hausdorff spaces is  $H$-closed.
 \end{corollary}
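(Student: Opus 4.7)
The plan is to deduce both statements directly from Theorem \ref{hi} together with the productivity of $D$-pseudocompactness established by Ginsburg and Saks \cite{GS} (which was already invoked in the proof of Corollary \ref{cor}).

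For the first assertion, I would fix a family $(X_j)_{j \in J}$ of $H(i)$ spaces and let $X = \prod_{j \in J} X_j$ be their Tychonoff product. Let $D$ be an arbitrary ultrafilter (over any index set). By the equivalence (1) $\Leftrightarrow$ (3) in Theorem \ref{hi}, each factor $X_j$ is $D$-pseudocompact. Since $D$-pseudocompactness is preserved under arbitrary products, the space $X$ is itself $D$-pseudocompact. As $D$ was arbitrary, applying the implication (3) $\Rightarrow$ (1) of Theorem \ref{hi} gives that $X$ is $H(i)$.

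For the second assertion, I would note that an arbitrary product of Hausdorff spaces is Hausdorff. If each factor is $H$-closed Hausdorff, then by the characterization of $H$-closedness recalled before Theorem \ref{hi} each factor is $H(i)$, so by the first part the product is $H(i)$; being also Hausdorff, the product is $H$-closed, again by the same characterization.

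There is no real obstacle here: the entire content of the corollary is packaged into the equivalence (1) $\Leftrightarrow$ (3) of Theorem \ref{hi} combined with the known productivity of $D$-pseudocompactness. The only thing worth emphasizing is that the same ultrafilter $D$ must be used simultaneously on every factor, so that productivity applies verbatim; this is automatic because we quantify over all $D$ and verify the conclusion one $D$ at a time.
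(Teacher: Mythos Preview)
Your argument is correct and is exactly the one the paper gives: invoke Theorem~\ref{hi} to pass between $H(i)$ and $D$-pseudocompactness for every $D$, and use the Ginsburg--Saks productivity of $D$-pseudocompactness; the Hausdorff case then follows via the equivalence (1) $\Leftrightarrow$ (5). The paper states this in one sentence, and you have simply unpacked the details.
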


 \begin{proof} 
By Theorem  \ref{hi}, and the mentioned result by Ginsburg and Saks
\cite{GS}  that
$D$-pseudocompactness is productive. 
\end{proof}   

\begin{remark} \labbel{conc}    
In conclusion, a few remarks are in order.
The situation described in this note is almost entirely similar
to the case dealing with  initial $\lambda$-\brfrt compactness and $D$-compactness.
Indeed, the proof of Theorem \ref{dps} can be easily modified in order
to show directly that if $2^\mu \leq \lambda $, then every initially $\lambda$-compact 
 topological space is 
 $D$-compact, for every ultrafilter $D$ over any cardinal $ \leq \mu$
(see also Theorem \ref{dpsf} and the remark thereafter). 
This result, however, is already an immediate consequence of 
implications (8) and (5) in \cite[Diagram 3.6]{St}.
Since $D$-compactness, too, is productive, we get
that
if $2 ^ \mu \leq \lambda $, then  any product of 
initially $\lambda$-compact 
 spaces is initially $\mu$-compact, 
  the result analogue to Corollary \ref{cor}.
The above arguments furnish also a proof of the well known result
that a space is compact if and only if it is $D$-compact, for every 
ultrafilter $D$, a theorem which, in turn, has 
the Tychonoff theorem 
that every product of compact spaces is compact as an immediate consequence.
This is entirely parallel  to Theorem \ref{hi} and Corollary \ref{hiprod}.
  
However, a subtle  difference exists between the two cases.
A sufficient condition for a topological space $X$ to be 
initially $\lambda$-compact is that, for every $\lambda'$
 with $ \omega\leq \lambda ' \leq \lambda $, there exists some
ultrafilter $D$ uniform over $\lambda'$ such that $X$ is $D$-compact
(see \cite[Theorem 5.13]{St} or, again, \cite[Diagram 3.6]{St}). 
The parallel statement fails, in general, for   
weak initial $\lambda$-compactness and $D$-pseudocompactness.
Indeed, under some set theoretical hypothesis, \cite[Example 1.9]{GF} 
constructed a space $X$ which is $D$-pseudocompact, for some ultrafilter 
uniform $D$ over $ \omega_1$, hence necessarily 
$D'$-\brfrt pseudocompact, for some ultrafilter $D'$ 
uniform over $ \omega$, but $X$ is not weakly initially 
$ \omega_1$-compact, actually, not even $ \omega_1$-pseudocompact. 
Cf. also \cite[Remark 30]{tapp2}. 

The above counterexample shows that, in our arguments, 
 and, in particular, in Proposition \ref{15}, 
we do need the notion 
of a regular ultrafilter; on the contrary, in the corresponding theory for
initial compactness, (a sufficient number of) uniform 
ultrafilters are enough. 
\end{remark}
\smallskip

Theorem \ref{dps} can be generalized to the abstract
framework of \cite[Section 5]{tapp2}. We recall here only the definitions,
and refer to \cite{tapp2} for motivations and further references.  

Suppose that $X$ is a topological space, $\mathcal F$ 
is a family of subsets of $X$, and 
  $\lambda$ is an infinite cardinals.
We say that
$X$ is
\emph{$\mathcal F$-$ [ \omega , \lambda ]$-\brfrt compact}
if and only if, 
for every open cover
 $( O _ \alpha ) _{ \alpha \in \lambda } $
 of $X$, there exists some finite $W \subseteq \lambda $
such that
$F \cap \bigcup _{ \alpha \in W}  O_ \alpha \not= \emptyset  $,
for every $F \in \mathcal F$.
If $D$ is an ultrafilter over some set $I$,
we say that $X$ is  $\mathcal F$-$D$-\emph{compact} 
if and only if every sequence
$(F_i)_{i \in I}$ 
of members of $\mathcal F$ 
has some $D$-limit point in $X$.

\begin{theorem} \labbel{dpsf}
If $X$ is an $\mathcal F$-$ [ \omega , \lambda ]$-\brfrt compact topological space, 
and $2^ \mu \leq \lambda $, then $X$ is $\mathcal F$-$D$-compact, for every 
ultrafilter $D$ over any set of cardinality $ \leq \mu$.
 \end{theorem}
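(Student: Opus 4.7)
The plan is to mimic the proof of Theorem \ref{dps} almost verbatim, substituting members $F_i$ of $\mathcal{F}$ for the nonempty open sets $O_i$, and using the $\mathcal{F}$-$[\omega,\lambda]$-compactness in place of weak initial $\lambda$-compactness at the single step where we extract a finite subfamily of the cover.

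First I would argue by contradiction: assume $X$ is $\mathcal{F}$-$[\omega,\lambda]$-compact, $D$ is an ultrafilter over some $I$ with $2^{|I|} \leq \lambda$, and suppose $(F_i)_{i \in I}$ is a sequence of members of $\mathcal{F}$ with no $D$-limit point in $X$. For each $x \in X$, pick an open neighborhood $U_x$ with $\{i \in I \mid U_x \cap F_i \neq \emptyset\} \notin D$, and set $Z_x = \{i \in I \mid U_x \cap F_i = \emptyset\}$; since $D$ is an ultrafilter, $Z_x \in D$. Then, exactly as in the proof of Theorem \ref{dps}, for each $Z \in D$ form $V_Z = \bigcup\{U_x \mid Z_x = Z\}$, obtaining an open cover $(V_Z)_{Z \in D}$ of $X$ whose cardinality is at most $|D| \leq 2^{|I|} \leq \lambda$. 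Padding with copies of $\emptyset$ if necessary, I can view this as an open cover indexed by $\lambda$, which is the form required by the definition of $\mathcal{F}$-$[\omega,\lambda]$-compactness.

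Next I would invoke $\mathcal{F}$-$[\omega,\lambda]$-compactness to extract finitely many elements $Z_1, \dots, Z_n \in D$ such that, for every $F \in \mathcal{F}$,
\[
F \cap (V_{Z_1} \cup \dots \cup V_{Z_n}) \neq \emptyset.
\]
Since $D$ is a filter, $Z_1 \cap \dots \cap Z_n \in D$ and in particular is nonempty, so I may choose some $i$ in this intersection. If $y \in F_i \cap V_{Z_k}$, then $y \in U_x$ for some $x$ with $Z_x = Z_k$, so $i \in Z_k = Z_x$ forces $U_x \cap F_i = \emptyset$, a contradiction. Hence $F_i \cap V_{Z_k} = \emptyset$ for each $k$, whence $F_i \cap (V_{Z_1} \cup \dots \cup V_{Z_n}) = \emptyset$. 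Since $F_i \in \mathcal{F}$, this contradicts the conclusion drawn from $\mathcal{F}$-$[\omega,\lambda]$-compactness, finishing the proof.

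There is essentially no obstacle beyond verifying that every step of the proof of Theorem \ref{dps} goes through in the abstract setting: the key insight is still the clever grouping of the neighborhoods $U_x$ by their ``$D$-kernel'' $Z_x$, which bounds the size of the resulting open cover by $|D| \leq 2^{|I|}$, while the density hypothesis (``$\bigcup_k V_{Z_k}$ is dense'') is simply replaced by its $\mathcal{F}$-relativization (``$F \cap \bigcup_k V_{Z_k} \neq \emptyset$ for every $F \in \mathcal{F}$''), which is exactly strong enough to derive the same contradiction when applied to $F = F_i$. The minor bookkeeping point worth a line of comment is the padding of the cover to index set $\lambda$ so that it matches the exact syntactic form of the definition of $\mathcal{F}$-$[\omega,\lambda]$-compactness.
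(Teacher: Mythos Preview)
Your proposal is correct and is exactly the approach the paper indicates: it simply says Theorem \ref{dpsf} is proved like Theorem \ref{dps}, replacing the $(O_i)_{i\in I}$ by a family $(F_i)_{i\in I}$ of members of $\mathcal F$. Your extra remark about padding the cover to index set $\lambda$ is a harmless bit of bookkeeping that the paper leaves implicit.
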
 

Theorem \ref{dpsf} is proved in a way similar
to Theorem \ref{dps},
by replacing everywhere the family 
$(O_i) _{i \in I} $   by an appropriate family $(F_i) _{i \in I} $
of members of $\mathcal F$.

 Notice that Theorem \ref{dps}
is the particular case of Theorem \ref{dpsf} when
$\mathcal F$ is the family of all nonempty open sets of $X$.
By considering  the particular case of Theorem \ref{dpsf}
in which $\mathcal F$ is the family of all singletons of $X$ we obtain  
the parallel result mentioned in Remark \ref{conc},
asserting that if $2^ \mu \leq \lambda $, then initial $\lambda$-compactness 
implies $D$-compactness, for every ultrafilter over a set of cardinality $\leq \mu$.

\begin{corollary} \labbel{corf}
Suppose that $X$ is a topological space, and $\mathcal F$ 
is a family of subsets of $X$. Then the following conditions are equivalent.
  \begin{enumerate}   
 \item  
$X$ is $\mathcal F$-$ [ \omega , \lambda ]$-\brfrt compact, for every infinite cardinal $\lambda$.
\item
$X$ is  $\mathcal F$-$D$-compact, for every ultrafilter
$D$.
\item
For every infinite cardinal $\lambda$, there exists some
regular ultrafilter $D$ over $\lambda$ such that $X$ is $\mathcal F$-$D$-compact.
 \end{enumerate}
 \end{corollary}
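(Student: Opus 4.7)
\smallskip

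\textbf{Plan.} The structure of Corollary \ref{corf} mirrors the equivalence of (2), (3), (4) in Theorem \ref{hi}, so the natural approach is to follow that same pattern, using Theorem \ref{dpsf} in place of Theorem \ref{dps}, and an $\mathcal F$-analogue of Proposition \ref{15} in place of the proposition itself. Thus I would prove the cycle (1) $\Rightarrow$ (2) $\Rightarrow$ (3) $\Rightarrow$ (1).

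The implication (1) $\Rightarrow$ (2) is immediate from Theorem \ref{dpsf}: given an ultrafilter $D$ over a set $I$, setting $\mu = |I|$ and $\lambda = 2^{\mu}$, the hypothesis that $X$ is $\mathcal F$-$[\omega,\lambda]$-compact combined with Theorem \ref{dpsf} yields that $X$ is $\mathcal F$-$D$-compact. The implication (2) $\Rightarrow$ (3) is trivial, using the existence of a regular ultrafilter over every infinite cardinal, as recalled right before Proposition \ref{15}.

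The main step is (3) $\Rightarrow$ (1), which is the $\mathcal F$-analogue of Proposition \ref{15}. I would argue as follows. Fix an infinite cardinal $\lambda$ and, by (3), a regular ultrafilter $D$ over $\lambda$ such that $X$ is $\mathcal F$-$D$-compact; let $\{A_\alpha : \alpha < \lambda\} \subseteq D$ be a regularizing family, so that $W_i := \{\alpha < \lambda : i \in A_\alpha\}$ is finite for every $i \in \lambda$. Given an open cover $(O_\alpha)_{\alpha < \lambda}$ of $X$, suppose toward a contradiction that no finite $W \subseteq \lambda$ witnesses the $\mathcal F$-covering property; for each finite $W$ pick $F_W \in \mathcal F$ with $F_W \cap \bigcup_{\alpha \in W} O_\alpha = \emptyset$, and set $F_i := F_{W_i}$. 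By $\mathcal F$-$D$-compactness the sequence $(F_i)_{i \in \lambda}$ has a $D$-limit $x \in X$, and $x \in O_\beta$ for some $\beta < \lambda$. By the definition of $D$-limit, $\{i \in \lambda : O_\beta \cap F_i \not= \emptyset\} \in D$, while for $i \in A_\beta$ one has $\beta \in W_i$, hence $F_i \cap O_\beta = \emptyset$ by the choice of $F_{W_i}$. Thus $A_\beta$ would be disjoint from a set in $D$, contradicting $A_\beta \in D$.

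The only real obstacle is this last step, and it amounts to the standard regular-ultrafilter bookkeeping already implicit in Proposition \ref{15}. The key point is precisely the duality $\beta \in W_i \Leftrightarrow i \in A_\beta$, which transfers the disjointness chosen at the finite stage $W_i$ into disjointness visible to the ultrafilter at $A_\beta$; the usual covers/points interplay behind Proposition \ref{15} becomes covers/members-of-$\mathcal F$ without any extra difficulty.
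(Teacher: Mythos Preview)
Your proof is correct and follows the same cycle (1) $\Rightarrow$ (2) $\Rightarrow$ (3) $\Rightarrow$ (1) that the paper uses, with Theorem~\ref{dpsf} supplying (1) $\Rightarrow$ (2) exactly as you do. The only difference is that for (3) $\Rightarrow$ (1) the paper simply invokes an external result (\cite[Theorem~35(2) $\Rightarrow$ (4)]{tapp2} with $|T|=1$), whereas you write out the regular-ultrafilter bookkeeping argument directly; your self-contained version is precisely the $\mathcal F$-analogue of what underlies Proposition~\ref{15}, so nothing is lost or gained beyond transparency.
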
  

\begin{proof}
Same as the proof of Theorem  \ref{hi}. The implication (3) $\Rightarrow $  (1)
follows from \cite[Theorem 35(2) $\Rightarrow $  (4)]{tapp2} with $| T|=1$.  
\end{proof}

As a concluding observation, we expect that Corollary \ref{cor} gives an optimal result,
 but we have not checked it.

\begin{problem} \labbel{prob}    
Characterize those pairs of cardinals
$\lambda$ and $\mu$ such that  the product of any family of
 (weakly) initially $\lambda$-compact spaces is 
(weakly) initially $\mu$-compact.
\end{problem}

\end{document}